\documentclass[12pt,a4paper]{amsart}

\usepackage[a4paper,centering,margin=2.5cm]{geometry}

\usepackage{color,url}
\usepackage{amssymb}
\usepackage{graphicx}
\usepackage{enumerate}

\parindent=0pt

\title{Connected covering numbers}
\dedicatory{Dedicated to the memory of Michel Las Vergnas}
\author[J. Chappelon \and K. Knauer \and L.P. Montejano \and J.L. Ram\'irez Alfons\'in]{Jonathan Chappelon$^{\star}$ \and Kolja Knauer \and Luis Pedro Montejano \and Jorge Luis Ram\'irez Alfons\'in}
\thanks{The second and forth authors were supported by the ANR TEOMATRO grant ANR-10-BLAN 0207}
\thanks{The third author was supported by CONACYT}
\thanks{$^\star$ Corresponding Author: Phone: +33-467144166. Email: jonathan.chappelon@um2.fr}
\address{Universit\'{e} Montpellier 2, Institut de Math\'{e}matiques et de Mod\'{e}lisation de Montpellier, Case Courrier 051, Place Eug\`{e}ne Bataillon, 34095 Montpellier Cedex 05, France.}
\email{jonathan.chappelon@um2.fr}
\email{kolja.knauer@googlemail.com}
\email{lpmontejano@gmail.com}
\email{jramirez@um2.fr}
\keywords{Covering design, Tur\'{a}n-system, uniform oriented matroid.}
\subjclass[2010]{05B05, 5E105}
\date{January 11, 2015}

\theoremstyle{plain}
\newtheorem{theorem}{Theorem}[section]

\newtheorem{proposition}[theorem]{Proposition}

\newtheorem{claim}[theorem]{Claim}

\theoremstyle{definition}

\newtheorem{example}[theorem]{Example}
\newtheorem{conjecture}[theorem]{Conjecture}

\newtheorem{question}[theorem]{Question}

\theoremstyle{remark}

\newtheorem{remarks}[theorem]{Remarks}

\newtheorem{case}{Case}

\newcommand{\C}{\mathrm{C}}
\newcommand{\Su}{\mathrm{S}}
\newcommand{\N}{\mathrm{N}}
\newcommand{\CC}{\mathrm{CC}}
\newcommand{\CT}{\mathrm{CT}}
\newcommand{\T}{\mathrm{T}}

\renewcommand{\le}{\leqslant}
\renewcommand{\leq}{\leqslant}
\renewcommand{\ge}{\geqslant}
\renewcommand{\geq}{\geqslant}

\begin{document}

\begin{abstract}
A connected covering is a design system in which the corresponding {\em block graph} is connected. The minimum size of such coverings are called {\em connected coverings numbers}. In this paper, we present  various formulas and bounds for several parameter settings for these numbers. We also investigate results in connection with {\em Tur\'an systems}. Finally, a new general upper bound, improving an earlier result, is given. The latter is used to improve upper bounds on a question concerning oriented matroid due to Las Vergnas. 
\end{abstract}

\maketitle

\section{Introduction}

Let $n,k,r$ be positive integers such that $n\ge k\ge r\ge 1$. A \textit{$(n,k,r)$-covering} is a family $\mathcal{B}$ of $k$-subsets of $\{1,\ldots,n\}$, called \textit{blocks}, such that each $r$-subset of $\{1,\ldots,n\}$ is contained in at least one of the blocks. The number of blocks is the covering's \textit{size}. The minimum size of such a covering is called the \textit{covering number} and is denoted by $\C(n,k,r)$. Given a $(n,k,r)$-covering $\mathcal{B}$, its graph $G(\mathcal{B})$ has $\mathcal{B}$ as vertices and two vertices are joined if they have one $r$-subset in common. We say that a $(n,k,r)$-covering is \textit{connected} if the graph $G(\mathcal{B})$ is connected. The minimum size of a connected $(n,k,r)$-covering is called the \textit{connected covering number} and is denoted by $\CC(n,k,r)$.

\begin{figure}[htb] 
 \includegraphics[width=.5\textwidth]{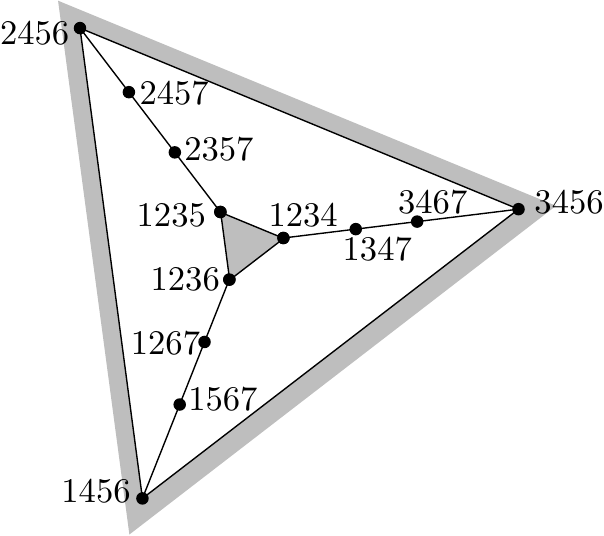}
 \caption{A connected $(7,4,3)$-covering with $12$ blocks.}
 \label{fig:CC743}
\end{figure}

The graph corresponding to a connected $(7,4,3)$-covering can be nicely illustrated as shown in Figure~\ref{fig:CC743}.
\par In this paper, we mainly pay our attention to coverings when $k=r+1$ and thus,  we will denote $\C(n,r+1,r)$ (resp. $\CC(n,r+1,r)$) by $\C(n,r)$ (resp. by $\CC(n,r)$) for short. The original motivation to study  $\CC(n,r)$ comes from the following question posed by Las Vergnas.

\begin{question}\label{quest:motiv}
Let $U_{r,n}$ be the rank $r$ uniform matroid on $n$ elements. What is the smallest number $s(n,r)$ of circuits of $U_{r,n}$, that uniquely determines all orientations of $U_{r,n}$? That is, whenever two uniform oriented matroids coincide on these circuits they must be equal.
\end{question}

In \cite{For-98}, Forge and Ram\'irez Alfons\'in introduced the notion of connected coverings and proved that 
\begin{equation}\label{l1}
s(n,r)\le \CC(n,r).
\end{equation}
The latter was then used to improve the best upper bound, $s(n,r)\le \binom{n-1}{r}$, known at that time due to Hamidoune and Las Vergnas \cite{Ham-86}; see also \cite{For-02} for related results.  

\smallskip

It turns out that $s(n,r)$ is also closely related to $\C(n,r)$. Indeed, by using results in \cite{For-98,For-02} it can be proved that
\begin{equation}\label{prop:motiv}
\C(n,r)\leq s(n,r).
\end{equation}

A proof (needing some oriented matroid notions and thus lying slightly out of scope of this paper) of a more general version of the above inequality can be found in \cite{Cha-13}.

Covering designs have been the subject of an enormous amount of research papers (see~\cite{Gor-95} for many upper bounds and~\cite{Sid-95} for a survey in the dual setting of Tur\'an-systems). Although the construction of block design is often very elusive and the proof of their existence is sometimes tough, here, we will be able to present explicit constructions yielding exact values and bounds for,$\C(n,r)$ and $\CC(n,r)$ for infinitely many cases. 
The study of $\C(n,r)$ and $\CC(n,r)$ seems to be interesting not only for Design Theory but also, in view of Equations~\eqref{prop:motiv} and~\eqref{l1}, for the implications on the behavior of $s(n,r)$ in Oriented Matroid Theory. 
This relationship was already remarked in~ \cite[Theorem 4.1]{For-98} where it was proved that  $\CC(n,r)\le 2\C(n,r).$ The latter can be slightly improved as follows
\begin{equation}\label{la1}
\CC(n,r)\le 2\C(n,r)-1,
\end{equation}
since the graph $G$ associated to a covering with $\C(n,r)$ blocks (and thus with $|V(G)|=\C(n,r)$) can be made connected by adding at most $\C(n,r)-1$ extra vertices (blocks), obtaining a graph corresponding to a $(n,r+1,r)$-connected covering with at most $2\C(n,r)-1$ blocks.

\medskip

Many interesting variants of Question~\ref{quest:motiv} can be investigated. For instance, for non-uniform (oriented) matroids (graphic, representable, etc.) and by varying the notion of what \emph{determine} means (up to orientations, bijections, etc.).  These (and other) variants are treated in another paper (see ~\cite{Cha-13}).

\medskip

This paper is organized as follows. In the next section, we recall some basic definitions and results concerning (connected) coverings and its connection with {\em Tur\'an systems} needed for the rest of the paper. In Section~\ref{ref:small}, we investigate connected covering numbers when the value $r$ is either small or close to $n$. Among other results, we give the exact value for $\CC(n,2)$ (Theorem~\ref{thm:CCn32}), for  $\CC(n,3)$ for $n\le 12$ (Theorem~\ref{prop:Cn43}) and for $\CC(n,n-3)$ (Theorem~\ref{thm:Turan1}). A famous conjecture of Tur\'an and its connection with our results is also discussed.  In Section~\ref{sec:gen}, we present a general upper bound for $\CC(n,r)$ (Theorem~\ref{th:gen}) allowing us to improve the best known upper bound for $s(n,r)$. We end the paper by discussing some asymptotic results in Section~\ref{sec:asym}.

\section{Basic results}

Let $n,m,p$ be positive integers such that $n\ge m\ge p$. A \textit{$(n,m,p)$-Tur\'an-system} is a family $\mathcal{D}$ of $p$-subsets of $\{1,\ldots,n\}$, called \textit{blocks}, such that each $m$-subset of $\{1,\ldots,n\}$ contains at least one of the blocks. The number of blocks is the \textit{size} of the Tur\'an-system. The minimum size of such a covering is called the \textit{Tur\'an Number} and is denoted by $\T(n,m,p)$. Given a $(n,m,p)$-Tur\'an-system $\mathcal{D}$, with $0\leq 2p-m\leq p$, its graph $G(\mathcal{D})$ has as vertices $\mathcal{D}$ and two vertices are joined if they have one $2p-m$-subset in common. We say that a $(n,m,p)$-Tur\'an-system with $0\leq 2p-m\leq p$ is \textit{connected} if $G(\mathcal{D})$ is connected. 

\smallskip

The minimum size of a connected $(n,m,p)$-Tur\'an-system is the \textit{connected Tur\'an Number} and is denoted by $\CT(n,m,p)$.  By applying set complement to blocks, it can be obtained that 
\begin{equation}\label{obs:dual1}
\C(n,k,r)=\T(n,n-r,n-k).
\end{equation}
Moreover, if  $0\leq n-2k+r\leq n-k$ then
\begin{equation}\label{obs:dual2}
\CC(n,k,r)=\CT(n,n-r,n-k).
\end{equation}
Note that the precondition for~\eqref{obs:dual2} is fulfilled if $k=r+1$.

Most of the papers on coverings consider $n$ large compared with $k$ and $r$, while for Tur\'an numbers it has  frequently been considered $n$ large compared with $m$ and $p$, and often focusing on the quantity $\lim_{n\rightarrow\infty} \T(n,m,p)/\binom{n}{p}$ for fixed $m$ and $p$. Thus, for Tur\'an-type problems, the value $\C(n,k,r)$ has usually been studied in the case when $k$ and $r$ are not too far from $n$.

\smallskip

Forge and Ram\'irez Alfons\'in~\cite{For-98} proved that
\begin{equation}\label{upper1}
\CC(n,r)\geq \frac{\binom{n}{r}-1}{r}=:\CC^*_1(n,r). 
\end{equation}
Moreover, Sidorenko~\cite{Sid-82} proved that
$\T(n,r+1,r)\ge \left(\frac{n-r}{n-r+1}\right)\frac{\binom{n}{r}}{r}$. Together with
\eqref{obs:dual1}, we obtain that
\begin{equation}\label{upper2}
\CC(n,r)\geq \C(n,r)=\T(n,n-r,n-r-1)\ge
\left(\frac{r+1}{r+2}\right)\frac{\binom{n}{r+1}}{n-r-1}=:\CC^*_2(n,r).
\end{equation}
Combining~\eqref{upper1} and~\eqref{upper2}, together with a straight forward computation we have
\begin{equation}\label{prop:lb}
\CC(n,r)\geq \max\{\CC^*_1(n,r), \CC^*_2(n,r)\}, 
\end{equation}
where the maximum is attained by the second term if and only if $r\ge \frac{2}{3}(n-1)$.

\smallskip

The following recursive lower bound for covering numbers was obtained by Sch\"onheim~\cite{Sch-64} and, independently, by Katona, Nemetz and Simonovits~\cite{Kat-64} 
\begin{equation}\label{thm:schoenheim1}
 \C(n,r)\geq \left\lceil \frac{n}{r+1}\C(n-1,r-1)\right\rceil
\end{equation}
which can be iterated yielding to
\begin{equation}\label{thm:schoenheim2}
\C(n,r)\geq \left\lceil\frac{n}{r+1}\left\lceil\frac{n-1}{r}\left\lceil\ldots\left\lceil\frac{n-r+1}{2}\right\rceil\ldots\right\rceil\right\rceil\right\rceil=:L(n,r).
\end{equation}

Forge and Ram\'irez Alfons\'in~\cite[Theorem 4.2]{For-98} proved that $\CC(n,r)\le\sum_{i=r+1}^{n-1}\C(i,r-1)$. In this proof, they used the following recursive upper bound that will be useful for us later,

\begin{equation}\label{recursive}
\CC(n,r) \le \CC(n-1,r)+\C(n-1,r-1).
\end{equation}

\section{Results for small and large $r$}\label{ref:small}

In this section, we investigate connected covering numbers for \emph{small }and {\em large} $r$, that is, when $r$ is very close to either $1$ or $n$. Let us start with the following observations.

\begin{remarks}\label{obs:small}
\begin{enumerate}[a)]
\item[]
\item
$\CC(n,0)=1$ since any $1$-element set contains the empty set.
\item
$\CC(n,1)=n-1$ by taking the edges of a spanning tree of $K_n$.
\item
$\CC(n,n-2)=n-1$ by taking all but one $(n-1)$-sets.
\item
$\CC(n,n-1)=1$ by taking the entire set.
\end{enumerate}
\end{remarks}

All these values coincide with the corresponding covering numbers except in the case $r=1$, where $\C(n,1)=\lceil\frac{n}{2}\rceil$.
 
\subsection{Results when $r$ is small}
\ \par
For ordinary covering numbers, Fort and Hedlund~\cite{For-58} have shown that $\C(n,2):=\lceil\frac{n}{3}\lceil\frac{n}{2}\rceil\rceil$ that coincides with the lower bounds given in~\eqref{thm:schoenheim2} when the case $r=2$.

\smallskip

We also have the precise value for the connected case when $r=2$.

\begin{theorem}\label{thm:CCn32}
Let $n$ be a positive integer with $n\ge 3$. Then, we have
$$
\CC(n,2)=\left\lceil \frac{\binom{n}{2}-1}{2}\right\rceil.
$$
\end{theorem}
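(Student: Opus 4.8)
The plan is to prove the two bounds separately. The lower bound is immediate: \eqref{upper1} gives $\CC(n,2)\ge\frac{\binom{n}{2}-1}{2}=\CC^*_1(n,2)$, and as $\CC(n,2)$ is an integer this already yields $\CC(n,2)\ge\lceil\frac{\binom{n}{2}-1}{2}\rceil$. So the whole content is the matching upper bound, which I would obtain by an explicit construction.

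Write $T(n):=\lceil\frac{\binom{n}{2}-1}{2}\rceil$, so that $T(n)=\frac{\binom{n}{2}}{2}$ when $n\equiv 0,1\pmod 4$ and $T(n)=\frac{\binom{n}{2}-1}{2}$ when $n\equiv 2,3\pmod 4$. Using $\C(n-1,1)=\lceil\frac{n-1}{2}\rceil$ and $\binom{n}{2}-\binom{n-1}{2}=n-1$, a short case check shows $T(n-1)+\C(n-1,1)=T(n)$ precisely when $n\not\equiv 2\pmod 4$, while for $n\equiv 2\pmod 4$ the left side equals $T(n)+1$. Hence, starting from $\CC(3,2)=1$ and iterating the recursive bound \eqref{recursive}, $\CC(n,2)\le\CC(n-1,2)+\C(n-1,1)$, induction gives $\CC(n,2)\le T(n)$ for every $n\not\equiv 2\pmod 4$ (arranging the induction so that, when $n-1\equiv 2\pmod 4$, the value $\CC(n-1,2)=T(n-1)$ comes from the case treated below).

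The remaining, and genuinely delicate, case is $n\equiv 2\pmod 4$, where \eqref{recursive} overshoots by one triangle. Here one must produce a connected $(n,3,2)$-covering of size exactly $\frac{\binom{n}{2}-1}{2}$; since then the counting behind \eqref{upper1} is tight, essentially every doubly-covered pair must serve as an edge of a spanning tree of the block graph, which makes the configuration very rigid. A clean way to search for it is as a walk $T_1,\dots,T_b$ in the Johnson graph $J(n,3)$ --- consecutive triples differing in one vertex, so the block graph contains the spanning path $T_1-\cdots-T_b$ --- whose triples cover all of $E(K_n)$ and in which every step contributes two pairs of $\{1,\dots,n\}$ not covered before; such a walk has exactly $b=\frac{\binom{n}{2}-1}{2}=T(n)$. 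The obstacle is to exhibit this walk explicitly: one marches through $E(K_n)$ altering one vertex of the current triple at a time, re-covering a pair only at the single shared edge of each step, which pins down a rather tight pattern. (For $n\equiv 0,1\pmod 4$ one instead allows a single ``defective'' step covering just one new pair, again landing at $T(n)$ triples --- so the walk viewpoint actually gives a uniform proof of the whole upper bound.) I expect constructing this walk to be the main hurdle; the rest is the one-line lower bound and the mod-$4$ induction.
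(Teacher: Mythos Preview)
Your lower bound is correct and identical to the paper's. Your mod-$4$ case analysis of the recursion~\eqref{recursive} is also correct. The problem is that the proof is not complete: as you yourself say, the construction for $n\equiv 2\pmod 4$ is ``the main hurdle'' and you have not provided it. Everything else you wrote reduces to that case (your induction for $n\equiv 3\pmod 4$ explicitly needs $\CC(n-1,2)=T(n-1)$ with $n-1\equiv 2\pmod 4$ as input), so without the construction there is no proof of the upper bound at all. Describing the desiderata of a Johnson-graph walk is not the same as producing one.

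The paper sidesteps the mod-$4$ split entirely by giving one explicit construction valid for every $n$. The idea is the same as your ``every new triangle adds exactly two new edges'' viewpoint, but with an important relaxation: each new triangle need only share an edge with \emph{some} earlier triangle, not with its immediate predecessor. This is weaker than a Hamiltonian-style walk in $J(n,3)$ and makes the construction much easier to write down. Concretely, the paper lays down a spine of ``black'' triangles on consecutive pairs, then all triangles of the form $\{2i-1,2i,j\}$ for $j\ge 2i+3$ (each hooked to the spine through the edge $\{2i-1,2i\}$), and finally a layer of ``gray'' triangles $\{2i,2i+1,2i+4\}$, each attached via an already-present edge $\{2i,2i+4\}$. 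Only the very last triangle may repeat two edges, which accounts for the ceiling. This explicit pattern is precisely the missing ingredient in your argument; once you have it, the recursive detour through~\eqref{recursive} is unnecessary.
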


\begin{proof}
Note that the claimed value coincides with the lower bound $\CC^*_1(n,2)$. This lower bound comes from the fact that every connected covering has a construction sequence, where every new triangle shares at least one edge with an already constructed triangle. We present a construction sequence where indeed every new triangle (except possibly the last one) shares exactly one edge with the already constructed ones.  Therefore, we attain the lower bound.
\begin{figure}[htb] 
\includegraphics[width=.5\textwidth]{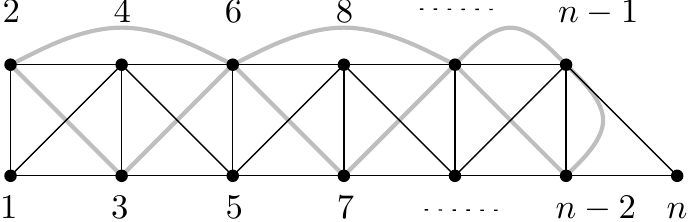}
\caption{Part of the construction proving $\CC(n,2)\leq \CC^*_1(n,2) $.}
\label{fig:CCn32}
\end{figure}
Part of the construction is shown in Figure~\ref{fig:CCn32}. We start presenting the black triangles from left to right. Then we present all triangles of the form $(2i-1, 2i, j)$ for $1\leq i\leq \frac{n}{2}$ and $j\geq 2i+3$. (These are not depicted in the figure.) Now we present the gray triangles from left to right. A gray triangle of the form $(2i, 2i+1, 2i+4)$ is connected to the already presented ones via $(2i-1, 2i, 2i+4)$. Note (as in the figure) the last triangle may indeed share two edges of already presented triangles, depending on the parity of $n$. This accounts for the ceiling in the formula. It is easy to check that all edges are covered.
\end{proof}

\begin{figure}[htb] 
 \includegraphics[width=.7\textwidth]{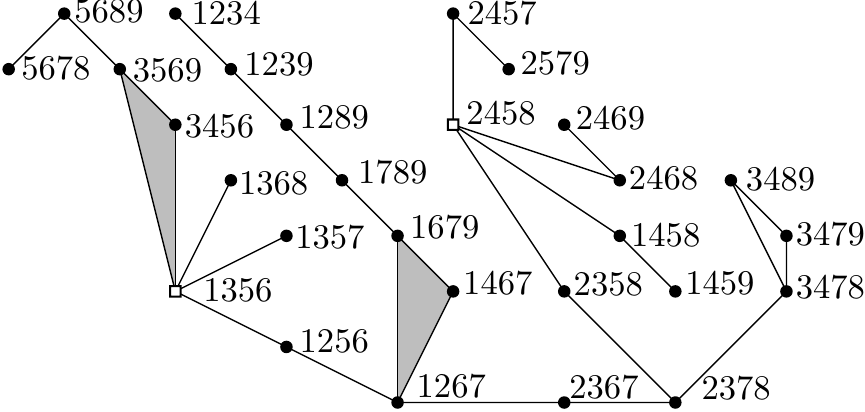}
 \caption{An example proving $\CC(9,3)\leq 28$. The circle-vertices are a covering.}
 \label{fig:CC943}
\end{figure}

The precise value of $C(n,3)$ remains unknown only for finitely many $n$, see~\cite{Mil-74,Mil-81,Ji-08}. The situation for connected coverings is worse.

\begin{theorem}\label{prop:Cn43}
Let $n$ be a positive integer with $4\le n\le 12$. Then, we have
$$
\CC(n,3)=\left\lceil \frac{\binom{n}{3}-1}{3}\right\rceil.
$$
\end{theorem}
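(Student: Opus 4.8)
The plan is to show equality by combining the general lower bound $\CC(n,3)\ge\CC^*_1(n,3)=\lceil(\binom{n}{3}-1)/3\rceil$ from~\eqref{upper1} with an explicit construction attaining it for each $n$ in the range $4\le n\le 12$. Since only finitely many values of $n$ are involved, I would not aim for a uniform closed-form construction but rather build the coverings recursively, leaning on the recursion~\eqref{recursive}, namely $\CC(n,3)\le\CC(n-1,3)+\C(n-1,2)$, together with the known value $\C(n,2)=\lceil\frac{n}{3}\lceil\frac{n}{2}\rceil\rceil$ of Fort and Hedlund. The idea is that if we have a connected $(n-1,4,3)$-covering of the optimal size and can add an optimal $(n-1,3,2)$-covering's worth of new blocks (all containing the new point $n$), in such a way that (i) every new $3$-set $\{a,b,n\}$ is covered and (ii) the block graph stays connected, then we obtain a connected $(n,4,3)$-covering whose size differs from the lower bound by a controlled amount. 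One first checks that for the relevant $n$ the quantity $\CC^*_1(n-1,3)+\C(n-1,2)$ equals (or, where it does not, one corrects by hand) $\CC^*_1(n,3)$, using the ceiling arithmetic; this is the routine numerical part.

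The key construction step is the following. Start from an optimal connected covering $\mathcal{B}_{n-1}$ on $\{1,\dots,n-1\}$. Take an optimal $(n-1,3,2)$-covering $\mathcal{E}$ of the edges of $K_{n-1}$ by triangles (this exists with $\C(n-1,2)$ triangles). For each such triangle $\{a,b,c\}\in\mathcal{E}$, form the block $B=\{a,b,c,n\}$; the collection of these blocks covers every $3$-set meeting $\{1,\dots,n-1\}$ in at least $2$ points and containing $n$, hence together with $\mathcal{B}_{n-1}$ all $3$-subsets of $\{1,\dots,n\}$ are covered. For connectivity, note that any two of the new blocks $\{a,b,c,n\}$ and $\{a',b',c',n\}$ coming from edge-adjacent triangles (sharing a vertex, say $a=a'$) share the $3$-set $\{a,\*,n\}$ only if their triangles share an edge — so one should instead invoke that the block graph of the \emph{triangle} covering $\mathcal{E}$, where triangles are adjacent when sharing an edge, can be taken connected (or made so with negligibly many extra blocks), and then each new block $\{a,b,c,n\}$ inherits adjacency through the common $3$-set $\{$shared edge$\}\cup\{n\}$. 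Finally, attach the new component to $\mathcal{B}_{n-1}$: pick one old block $\{x,y,z,w\}\in\mathcal{B}_{n-1}$ and one new block containing a triangle inside $\{x,y,z,w\}$, e.g. the new block $\{x,y,z,n\}$ — if $\{x,y,z\}\in\mathcal{E}$ it is already present, sharing the $3$-set $\{x,y,z\}$ with the old block; if not, adding this single block restores connectivity at a cost that must be absorbed into the ceiling. The base case $n=4$ is trivial ($\CC(4,3)=1$), and one then walks up $n=5,6,\dots,12$, at each stage verifying the size matches $\lceil(\binom{n}{3}-1)/3\rceil$ — possibly displaying the explicit block lists for the small cases (the figure for $\CC(9,3)\le 28$ suggests the authors do exactly this kind of hand-checking).

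The main obstacle is precisely the bookkeeping of connectivity versus size: the naive recursion~\eqref{recursive} is an \emph{upper} bound that need not be tight, and the lower bound $\CC^*_1$ is attained only when essentially every block contributes a fresh edge-sharing, i.e.\ the block graph is close to a tree. So for several values of $n$ one will not be able to use a generic recursive step but must exhibit an ad hoc optimal covering and verify by direct (finite) computation that its block graph is connected and that every $3$-set is covered — this is why the theorem is restricted to $n\le 12$ rather than stated for all $n$. I expect the write-up to consist of: the one-line lower bound; a general recursive construction handling the "easy" values; and explicit block lists (or figures, as with $\CC(9,3)$) for the remaining values, each accompanied by a short verification that the claimed size equals $\lceil(\binom{n}{3}-1)/3\rceil$.
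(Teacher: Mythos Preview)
Your plan matches the paper's approach: lower bound $\CC^*_1(n,3)$, the recursion~\eqref{recursive} where it happens to be tight (the paper invokes it for $n=8$ and $n=10$), and explicit constructions elsewhere (the paper cites~\cite{For-98} for $n\le 6$ and gives figures for $n=7,9,11$). Two points deserve sharpening. First, your connectivity discussion is more tangled than necessary: each new block $\{a,b,c,n\}$ already shares the triple $\{a,b,c\}$ with whichever block of $\mathcal{B}_{n-1}$ covers it, so every new block is \emph{automatically} adjacent to the old connected covering and there is no need to make the triangle system $\mathcal{E}$ edge-connected (which would in fact cost you $\CC(n-1,2)>\C(n-1,2)$ blocks). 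Second, for $n=12$ the straight recursion overshoots by one ($55+\C(11,2)=55+19=74$), and your phrase ``corrects by hand'' hides the only nontrivial idea in the proof: the paper first \emph{deletes} a redundant block from the optimal $n=11$ covering (leaving a $54$-block covering whose graph has three components), then adds a specific disconnected $(11,3,2)$-covering of size $19$ with each triple augmented by $12$, chosen so that three of the new blocks form a triangle reconnecting the three components, landing exactly on $73$.
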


\begin{proof}
Note that the claimed value coincides with the lower bound $\CC^*_1(n,3)$. For $n\leq 6$ this is already checked in~\cite{For-98}. Figure~\ref{fig:CC743} proves $\CC(7,3)\leq 12=\CC^*_1(7,3)$.  By using~\eqref{recursive}, $\C(7,2)=7$, and $\CC(7,3)=12$, we obtain that $\CC(8,3)\le 19=\CC^*_1(8,3)$. Figure~\ref{fig:CC943} proves $\CC(9,3)\leq 28=\CC^*_1(9,3)$. From equation~\eqref{recursive} and the fact that $\CC(9,3)=28$ and $\C(9,2)=12$, we conclude that $\CC(10,3)\le 40=\CC^*_1(10,3)$. Now, Figure~\ref{fig:CC1143} proves that $\CC(11,3)\leq 55=\CC^*_1(11,3)$. Finally, to construct a connected covering witnessing $\CC^*_1(12,3)$ we delete the block $\{2,4,6,8\}$ from the covering in Figure~\ref{fig:CC1143}. One can check that this still leaves a covering $\mathcal{B}$, whose graph now has three components. Now, we take the following (disconnected) $(11,3,2)$-covering:
$$
\left\{\begin{array}{l}
\{1,3,11\} , \{1,4,6\} , \{1,2,8\} , \{1,5,9\} , \{1,7,10\} , \{3,4,9\} , \{2,3,10\} , \{3,5,6\} , \\ \{3,7,8\} ,  \{2,4,6\} , \{4,5,7\} , \{4,10,11\} , \{4,6,8\} , \{2,5,11\} , \{2,7,9\} , \{5,8,10\} , \\ \{6,7,11\} , \{8,9,11\} , \{6,9,10\}
\end{array}\right\}.
$$
We add to each of these block the element $12$ and thus together with $\mathcal{B}$ obtain a $(12,4,3)$-covering $\mathcal{B}'$. To see that $\mathcal{B}'$ is connected, note that each of the blocks containing $12$ is connected to a block from $\mathcal{B}$. Moreover, the blocks $\{1,  4,  6 , 12\}, \{2,  4,  6, 12 \},\{4,  6, 8, 12 \}$ form a triangle and each of them has a neighbor in a different component of $G(\mathcal{B})$. Thus, $G(\mathcal{B}')$ is connected and $\mathcal{B}'$ has $73$ blocks which coincides with $\CC^*_1(12,3)$.
\end{proof}

\begin{figure}[htb] 
 \includegraphics[width=.8\textwidth]{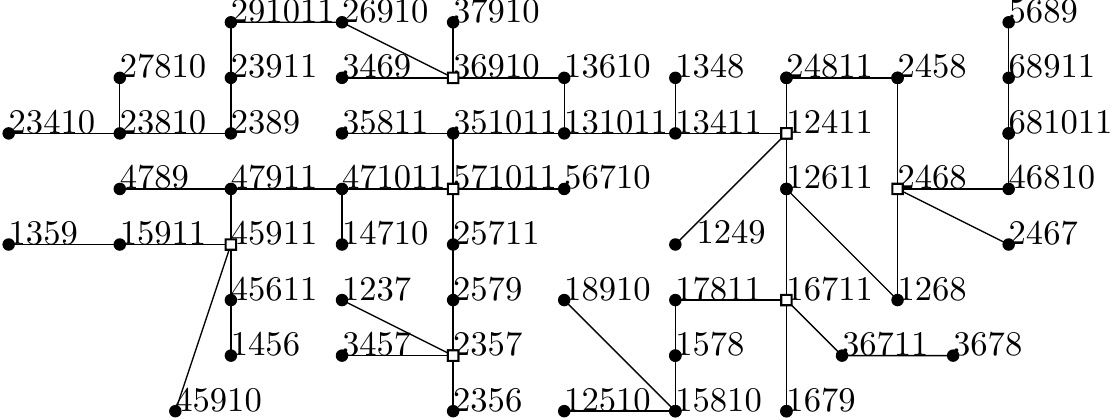}
 \caption{An example proving $\CC(11,3)\leq 55$. The circle-vertices are a covering.}
 \label{fig:CC1143}
\end{figure}

Theorem~\ref{prop:Cn43} supports the following

\begin{conjecture}
For every positive integer $n\ge 4$, we have
$$
\CC(n,3)=\CC^*_1(n,3).
$$
\end{conjecture}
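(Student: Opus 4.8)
We do not settle this conjecture here, but we outline the line of attack we find most promising. Since $\CC(n,3)\ge\CC^*_1(n,3)$ is precisely inequality~\eqref{upper1}, the whole content of the conjecture is the reverse bound: for every $n\ge 4$ there is a connected $(n,4,3)$-covering of size $\left\lceil\bigl(\binom n3-1\bigr)/3\right\rceil$. It helps to record what an extremal such covering looks like. Running the construction-sequence argument behind~\eqref{upper1} in reverse, a connected covering of size $N=\CC^*_1(n,3)$ exists if and only if one can order $4$-subsets $B_1,\dots,B_N$ of $\{1,\dots,n\}$ so that the $B_i$ cover all triples, each $B_i$ with $i\ge 2$ meets $B_1\cup\cdots\cup B_{i-1}$ in a triple, and for every $i$ but at most one the other three triples of $B_i$ are previously uncovered; equivalently, a spanning tree on the blocks whose tree-edges carry the ``shared'' triples, in which every non-root block donates its remaining three triples freshly, with a single permitted exception to absorb the ceiling. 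The plan is to produce such an object by induction on $n$, the base cases coming from explicit constructions.

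The first ingredient is the recursion~\eqref{recursive}, $\CC(n,3)\le\CC(n-1,3)+\C(n-1,2)$, fed by Fort and Hedlund's determination of $\C(v,2)$ as the Sch\"onheim bound $L(v,2)$ of~\eqref{thm:schoenheim2}. Using $\binom n3-\binom{n-1}3=\binom{n-1}2$ and chasing the ceilings, one checks that for \emph{even} $n$ the step is tight, $\CC^*_1(n-1,3)+\C(n-1,2)=\CC^*_1(n,3)$, outside a set of $n$ cut out by an explicit congruence (the smallest exception being $n=12$, which is exactly why it needed separate treatment in the proof of Theorem~\ref{prop:Cn43}); for those exceptional even $n$ the step overshoots by $1$ and must be replaced by a finer ``add one point'' argument. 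For \emph{odd} $n$ the situation is genuinely worse: $\C(n-1,2)$ exceeds $\binom{n-1}2/3$ by a quantity that grows (slowly) with $n$, so~\eqref{recursive} from $n-1$ cannot suffice and direct constructions are needed for infinitely many odd $n$. In the end the problem reduces to a bounded list of residue classes of $n$ for which extremal connected coverings must be built from scratch.

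For that build-from-scratch part we would use an ``add two points at a time'' step, which fits the extremal picture much better. In passing from $\{1,\dots,n-2\}$ to $\{1,\dots,n\}$ the new triples are those meeting $\{n-1,n\}$: the $n-2$ triples $\{n-1,n,a\}$ and the two families $\{n-1,a,b\}$, $\{n,a,b\}$, altogether $\binom n3-\binom{n-2}3=(n-2)^2$ of them. We would cover these by ``wide'' blocks $\{n-1,n\}\cup e$ with $e$ running over the edges of a cleverly chosen $2$-factor $E$ (say a Hamilton cycle) on $\{1,\dots,n-2\}$ -- which disposes of all $\{n-1,n,a\}$ and of the $E$-pairs in both slots -- together with ``narrow'' blocks $\{n-1\}\cup t$ and $\{n\}\cup t$, $t$ running over a near-optimal triangle covering of $K_{n-2}$ in which the edges of $E$ count as holes. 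The resulting batch has size $(n-2)+2\bigl\lceil\bigl(\binom{n-2}2-(n-2)\bigr)/3\bigr\rceil$ up to rounding, matching the increment $(n-2)^2/3$ prescribed by~\eqref{upper1}; and connectivity is automatic, since each narrow block $\{n-1\}\cup t$ shares the triple $t$ with a block of the (inductively connected) covering on $\{1,\dots,n-2\}$, the wide blocks chain into a cycle among themselves, and a single narrow block whose triple contains an $E$-edge bridges the wide family to the rest.

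The main obstacle is that the two requirements tug against each other. An \emph{optimal} covering of triples by quadruples is almost a Steiner quadruple system, so its block graph is essentially edgeless, whereas \emph{connectivity} demands a spanning tree of shared triples, and each shared triple is a re-covered triple -- while the slack in~\eqref{upper1} permits essentially no waste at all (at most one inefficient block). So one must exhibit a single object that is simultaneously globally near-Steiner and globally threaded, and doing this uniformly over all residue classes of $n$ is precisely what still makes the statement a conjecture. Concretely, the bottleneck in the scheme above is to prove that $K_{n-2}$ with a prescribed $2$-factor removed has a triangle covering of the sharp size -- a ``covering with a $2$-regular hole'' -- that is, moreover, compatible with the spanning-tree bookkeeping. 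A secondary, purely arithmetic obstacle, already responsible for the separate handling of $n=12$, is to make the ceilings in $\CC^*_1$, in~\eqref{recursive}, in the Sch\"onheim formula, and in the two-point increment line up for \emph{every} residue of $n$; we expect the final argument to be a case analysis modulo a small integer, with one base construction per class and the recursions carrying the rest.
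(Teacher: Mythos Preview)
The statement you were asked to address is labeled a \emph{Conjecture} in the paper, and the paper offers no proof of it; Theorem~\ref{prop:Cn43} establishes only the cases $4\le n\le 12$ by explicit constructions, and the conjecture is then put forward as the natural extrapolation. So there is no ``paper's own proof'' to compare your attempt against.

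Your write-up is consistent with this: you say up front that you do not settle the conjecture and instead sketch a strategy. As a strategy it is sensible --- the reduction to producing a construction sequence with at most one wasteful block is exactly the right reformulation of attaining $\CC^*_1$, the diagnosis of when the one-point recursion~\eqref{recursive} is tight matches the paper's handling of $n=8,10$ versus $n=12$, and your ``add two points'' scheme together with the triangle-covering-with-a-$2$-regular-hole subproblem is a reasonable way to attack the residues where~\eqref{recursive} overshoots. But none of this is a proof: the existence of the required near-perfect triangle coverings with prescribed holes, compatible with the spanning-tree bookkeeping and with all the ceilings aligned, is precisely the unresolved core, as you yourself acknowledge in your final paragraph. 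In short, your proposal is an honest outline of open work rather than a proof, and that is the correct status for this statement.
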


Even, more ambitious,

\begin{question}
Let $n$ and $r$ be two positive integers such that $n\ge r+1\ge 4$. Is it true that if $\CC(n,r)=\CC^*_1(n,r)$ then $\CC(n',r)=\CC^*_1(n',r)$
for every integer $n'\geq n$ ?
\end{question}

\subsection{Results when $r$ is large}

\begin{theorem}\label{thm:Turan1}
Let $n$ be a positive integer with $n\ge 3$. Then, we have
$$
\CC(n,n-3)=\binom{\left\lceil\frac{n}{2}\right\rceil}{2}+\binom{\left\lfloor\frac{n}{2}\right\rfloor}{2}+1.
$$
\end{theorem}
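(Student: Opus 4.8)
The plan is to pass to the dual Tur\'an-system formulation from Section~2 and recognise the resulting extremal problem as an instance of Mantel's theorem.

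\textbf{Step 1: reduction to a connected Tur\'an system.} Since $r=n-3$ gives $k=r+1=n-2$, the precondition for~\eqref{obs:dual2} holds, so $\CC(n,n-3)=\CC(n,n-2,n-3)=\CT(n,3,2)$. I would then unwind this last quantity. Identifying $2$-subsets of $\{1,\dots,n\}$ with edges of a graph, a $(n,3,2)$-Tur\'an system is exactly the edge set of a graph $G$ on $\{1,\dots,n\}$ such that every triple of vertices spans at least one edge of $G$, that is, such that the complement graph $\overline G$ is triangle-free. Here $2p-m=1$, so two blocks of the Tur\'an system are adjacent in $G(\mathcal D)$ precisely when the corresponding edges of $G$ share a vertex; hence $G(\mathcal D)$ is the line graph of $G$, which is connected if and only if all edges of $G$ lie in a single connected component of $G$. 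Consequently, writing $M$ for the maximum number of edges of a triangle-free graph $H$ on $n$ vertices whose complement $\overline H$ has all of its edges in one connected component, one gets $\CC(n,n-3)=\CT(n,3,2)=\binom{n}{2}-M$.

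\textbf{Step 2: the lower bound.} I would show $M\le\lfloor n^2/4\rfloor-1$ for $n\ge 4$. If a triangle-free graph $H$ had $\lfloor n^2/4\rfloor$ edges, then by Mantel's theorem together with the uniqueness of its extremal graph, $H=K_{\lceil n/2\rceil,\,\lfloor n/2\rfloor}$, so $\overline H$ would be the vertex-disjoint union of the cliques on the two parts; for $n\ge 4$ both parts have at least two vertices, so each clique contributes an edge and the edges of $\overline H$ split into two distinct components, making $H$ inadmissible. Hence every admissible $H$ has at most $\lfloor n^2/4\rfloor-1$ edges. Combining this with the identity $\binom{n}{2}-\lfloor n^2/4\rfloor=\binom{\lceil n/2\rceil}{2}+\binom{\lfloor n/2\rfloor}{2}$ (which follows from $\binom{a+b}{2}=\binom{a}{2}+\binom{b}{2}+ab$ applied with $a=\lceil n/2\rceil$, $b=\lfloor n/2\rfloor$ and $ab=\lfloor n^2/4\rfloor$) gives
$$\CC(n,n-3)=\binom{n}{2}-M\ \ge\ \binom{\left\lceil\frac{n}{2}\right\rceil}{2}+\binom{\left\lfloor\frac{n}{2}\right\rfloor}{2}+1.$$

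\textbf{Step 3: a matching construction.} For the reverse inequality I would produce an optimal triangle-free graph: let $H$ be obtained from $K_{\lceil n/2\rceil,\,\lfloor n/2\rfloor}$ by deleting a single edge $uv$ joining the two parts. Then $H$ is triangle-free with exactly $\lfloor n^2/4\rfloor-1$ edges, and $\overline H$ is the union of the two cliques on the parts together with the one edge $uv$ linking them, so all edges of $\overline H$ lie in one connected component; thus $H$ is admissible and $M\ge\lfloor n^2/4\rfloor-1$. Therefore $\CC(n,n-3)=\binom{n}{2}-M\le\binom{\lceil n/2\rceil}{2}+\binom{\lfloor n/2\rfloor}{2}+1$, and together with Step~2 this gives the claimed equality.

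\textbf{Main obstacle.} The substantive input is Mantel's theorem and, crucially, the \emph{uniqueness} of its extremal graph --- this is what forces an admissible near-extremal graph to lose exactly one edge, and hence pins down the value rather than just an asymptotic. The only other point requiring care is the faithful translation of the connectivity of $G(\mathcal D)$ into the condition ``all edges of $G$ lie in one component''; once that is in place the argument is bookkeeping plus the arithmetic identity. (As stated the formula needs $n\ge 4$; the degenerate value $n=3$, where $r=n-3=0$, is best dealt with separately.)
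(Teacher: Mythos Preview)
Your argument is correct and follows essentially the same route as the paper: pass via~\eqref{obs:dual1}--\eqref{obs:dual2} to $\CT(n,3,2)$, invoke Mantel's theorem together with the uniqueness of the extremal configuration (the two disjoint cliques, or equivalently by complementation the balanced complete bipartite graph), observe that this unique optimum fails the connectivity condition, and repair it with a single extra edge. The only cosmetic difference is that you phrase everything through the triangle-free complement $H=\overline G$ rather than the Tur\'an system $G$ itself, and your remark that the stated formula does not match $\CC(3,0)=1$ is a fair observation about the boundary case.
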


\begin{proof}
The parameter $\C(n,n-3)=\T(n,3,2)$ was determined already by Mantel in 1907~\cite{Man-07} and is $\binom{\lceil\frac{n}{2}\rceil}{2}+\binom{\lfloor\frac{n}{2}\rfloor}{2}$. Tur\'an proved that the \emph{unique} minimal configuration of sets of size $2$ hitting all sets of size $3$ of an $n$-set are the edges of two vertex-disjoint complete graphs $K_{\lceil\frac{n}{2}\rceil}$ and $K_{\lfloor\frac{n}{2}\rfloor}$, see~\cite{Tur-41}.

\smallskip

Now, by~\eqref{obs:dual1} and~\eqref{obs:dual2}, the covering corresponding to the Tur\'an-system is connected if and only if the graph whose edges correspond to the blocks of the Tur\'an-system is connected. Thus, since the unique optimal construction by Tur\'an is not connected but can be made connected by adding a single edge connecting the two complete graphs, this is optimal with respect to connectivity. Therefore, $\CC(n,n-3)=\T(n,3,2)+1$, giving the result.
\end{proof}

\begin{proposition}\label{prop:Turan2}
Let $n\neq 5,6,8,9$ be a positive integer with $n\ge 4$. Then, we have
$$
\CC(n,n-4)\leq \begin{cases}
 m(m-1)(2m-1) & \text{if }n=3m, \\
 m^2(2m-1) & \text{if }n=3m+1,\\
 m^2(2m+1) & \text{if }n=3m+2.\\
\end{cases}
$$
If $n=5,6,9$  the value of $\CC(n,n-4)$ is one larger than claimed in the formula. 
Further, $\CC(8,4)\in\{20,21\}$, i.e., it remains open if the above formula has to be increased by one or not in order to give the precise value.
\end{proposition}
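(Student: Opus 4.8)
The plan is to pass to the dual (Tur\'an) side. Since $k=r+1$ here, the preconditions of~\eqref{obs:dual1} and~\eqref{obs:dual2} hold, so $\CC(n,n-4)=\CT(n,4,3)$ and $\C(n,n-4)=\T(n,4,3)$; it therefore suffices to produce, for each residue class of $n$ modulo $3$, a connected $(n,4,3)$-Tur\'an system whose size equals the number in the formula, and to treat the four small values $n\in\{5,6,8,9\}$ by hand. First I would recall the classical (conjecturally optimal) Tur\'an configuration: partition $\{1,\ldots,n\}$ into $V_0\cup V_1\cup V_2$ of sizes as equal as possible, and take as blocks all $3$-subsets lying inside one $V_i$ together with all $3$-subsets having two elements in some $V_i$ and one in $V_{i+1}$ (indices modulo $3$). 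A short check on how a $4$-set is distributed among the $V_i$ shows this is a Tur\'an system, and summing $\binom{|V_i|}{3}$ over $i$ and $\binom{|V_i|}{2}|V_{i+1}|$ over the three cyclic pairs produces exactly $m(m-1)(2m-1)$, $m^2(2m-1)$ and $m^2(2m+1)$ in the cases $n=3m$, $3m+1$, $3m+2$. This already gives $\C(n,n-4)\le$ the claimed value; the content of the proposition is to attain it while keeping the block graph connected.

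The obstacle is that the block graph of the plain cyclic configuration is disconnected: it decomposes into (at most) three pieces, the $i$-th consisting of the $3$-subsets inside $V_i$ together with the $3$-subsets of type $(V_i,V_i,V_{i+1})$, since any block meeting two parts has its potential $2$-element overlaps confined to one such piece. The heart of the proof is thus to replace the cyclic configuration by a connected Tur\'an system of the \emph{same} cardinality. I would exploit the abundance of extremal $(n,4,3)$-Tur\'an configurations --- for $n=3m$ there are many non-isomorphic minimum configurations, obtained from the cyclic one by local rotations around individual vertices --- and rotate the blocks around a fixed reference vertex so that some rotated block becomes adjacent (shares a $2$-subset) to a block of each of the three pieces simultaneously; for instance a transversal triple $\{a,b,c\}$ with $a\in V_0$, $b\in V_1$, $c\in V_2$ shares $\{a,b\}$, $\{b,c\}$, $\{c,a\}$ with a $(V_0,V_0,V_1)$-, a $(V_1,V_1,V_2)$- and a $(V_2,V_2,V_0)$-block respectively. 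One then offsets the bookkeeping by discarding a block that the rotation has rendered redundant. Checking simultaneously that the outcome still covers every $4$-set, still has the Tur\'an-optimal number of blocks, and now has connected block graph --- carried out separately in the three residue classes --- is the step I expect to be the main difficulty, and it is exactly the lack of enough room to carry it out that forces the $+1$ for the smallest parameters.

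Finally, I would settle the four exceptions directly. For $n=5$ and $n=6$ one has $n-4\in\{1,2\}$, and the exact values $\CC(5,1)=n-1=4$ (Remarks~\ref{obs:small}) and $\CC(6,2)=\lceil(\binom{6}{2}-1)/2\rceil=7$ (Theorem~\ref{thm:CCn32}) are already available; these are $3+1$ and $6+1$, one more than the formula. For $n=9$, the Tur\'an number $\T(9,4,3)=30$ is known, so $\CC(9,5)=\CT(9,4,3)\ge 30$; a finite inspection of the minimum $(9,4,3)$-Tur\'an systems shows that none has connected block graph, raising this to $\CC(9,5)\ge 31$, while the cyclic configuration together with one transversal triple (which, as noted, connects all three pieces) is a $31$-block connected Tur\'an system, so $\CC(9,5)=31=30+1$. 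For $n=8$ the same argument gives $\CC(8,4)\ge\T(8,4,3)=20$ and, via the cyclic configuration plus one transversal triple, $\CC(8,4)\le 21$; whether some minimum $(8,4,3)$-Tur\'an system has connected block graph remains undecided, which is precisely why only $\CC(8,4)\in\{20,21\}$ can be asserted.
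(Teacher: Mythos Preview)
Your plan---pass to the Tur\'an dual and search Kostochka's family of extremal $(n,4,3)$-systems for a connected member---is exactly the route the paper takes, and your treatment of the exceptional values $n\in\{5,6,8,9\}$ agrees with the paper's (for $n=9$ the paper cites Sidorenko to reduce the ``finite inspection'' to just two systems, both disconnected). The gap is in the central step for general $n$. The heuristic ``introduce a transversal triple $\{a,b,c\}$ and discard a block it has made redundant'' does not work: for any $a'\in V_0\setminus\{a\}$ and $d\in V_2\setminus\{c\}$ the $4$-set $\{a,a',b,d\}$ contains \emph{only} the block $\{a,a',b\}$ of Tur\'an's cyclic system, so a single added transversal renders no existing block redundant. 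Kostochka's rotations are not of this add-one/delete-one type; they reassign many triples simultaneously, and it is not evident which choices yield a connected block graph. What you have flagged as ``the main difficulty'' is thus the whole content of the proof, and it is not carried out.

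The paper resolves it by writing down one explicit member of Kostochka's family for $n=3m\ge 12$: mark two special elements $x_i,y_i$ in each part $A_i$, set $B_i=A_i\setminus\{x_i,y_i\}$, and take four block families $L_i,T1_i,T2_i,T3_i$; the last two already contain transversal triples such as $\{y_1,e,y_2\}$ with $e\in B_0$, and a three-step walk through them is exhibited to join the $L$-components. The cases $n-1$ and $n-2$ follow by deleting $x_i$, respectively $\{x_i,x_{i+1}\}$---elements deliberately kept out of the connectivity path. You also omit $n=7$, which lies outside this general construction (here $|B_i|=0$) and which the paper handles by an explicit $12$-block design.
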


\begin{proof}
We will show that a Tur\'an-system $\mathcal{D}$ verifying the claimed bounds due to Kostochka~\cite{Kos-82} is connected. Indeed the construction of~\cite{Kos-82} is a parametrized family of Tur\'an-systems, each of whose members attains the claimed bound. Our construction results from picking special parameters:

Assume that $n\geq 12$ and $n$ is divisible by $3$. Split $[n]$ into three sets $A_1, A_2, A_3$ of equal size. Pick special elements $x_i, y_i\in A_i$ and denote $B_i:=A_i\setminus\{x_i,y_i\}$ for $i=0,1,2$. The blocks of $\mathcal{D}$ consist of $3$-element sets $\{a,b,c\}$ of the following forms:
\begin{itemize}
 \item[$L_i$:] $a,b,c \in A_i$,
 \item[$T1_i$:] $a=x_i$ and $b,c\in A_{i+1}$,
 \item[$T2_i$:] $a=y_i$ and $b,c\in B_{i-1}\cup\{x_{i+1},y_{i+1}\}$,
 \item[$T3_i$:] $a\in B_i$ and $b,c\in B_{i-1}\cup\{x_{i+1},y_{i-1}\}$
\end{itemize}
where $i=0,1,2$, and addition of indices is understood modulo $3$. 

Let us now show that $\mathcal{D}$ is connected. Clearly, all blocks in a given $A_i$ are connected and all $2$-element subsets in each $A_i$ are covered by a block in this $A_i$. Thus, it suffices to verify that there are two $2$-element sets $\{e,f\}\subseteq A_0$ and $\{e',f'\}\subseteq A_2$ which can be connected by a sequence of blocks of $\mathcal{D}$, because then any block in $A_0$ containing $\{e,f\}$ is connected to any block in $A_2$ containing $\{e',f'\}$. The connectivity of $\mathcal{D}$ then follows by the symmetry of the construction. Let $\{e,f\}\subseteq B_0$. Take $\{e,f,y_1\}\in T2_1$, then $\{e,y_1,y_2\}\in T2_1$, and then $\{e,f',y_2\}\in T3_0$, where $f'\in B_2$, i.e,  $\{y_2,f'\}\subseteq B_2$.

Now, following~\cite{Kos-82} deleting any element of such a system yields a Tur\'an-system $\mathcal{D}'$ of the claimed size for $n'=n-1$. We can just delete any $x_i$, since these are not used for connectivity.
Following~\cite{Kos-82}, two elements can be removed from $\mathcal{D}$ to obtain a Tur\'an-system $\mathcal{D}''$ of the claimed size for $n''=n-2$, if the set formed by these two elements belongs to exactly $\frac{n}{3}-1$ blocks. This is the case for $\{x_i, x_{i+1}\}$, which belongs to exactly $\frac{n}{3}-1$ blocks from $T1_i$. Again, this preserves connectivity.

We are left with the cases $n\leq 9$. In~\cite{Sid-82} it is shown that the Tur\'an-systems of the claimed size for $n=9$ are exactly the members of the family constructed in~\cite{Kos-82}. There are exactly two such systems:

In both cases $[9]$ is split into three sets $A_1, A_2, A_3$ of size $3$. In the first system we pick a $x_i\in A_i$ and denote $A_i\setminus \{x_i\}$ by $B_i$. The blocks then are the $3$-element sets $\{a,b,c\}$ of the following forms:
\begin{itemize}
 \item[$L_i$:] $a,b,c \in A_i$,
 \item[$T1_i$:] $a=x_i$ and $b,c\in A_{i+1}$,
 \item[$T2_i$:] $a\in B_i$ and $b,c\in B_{i-1}\cup\{x_{i+1}\}$.
\end{itemize}

The second system coincides with an instance of a construction due to Tur\'an~\cite{Tur-61}. It consists of the following $3$-element sets:
\begin{itemize}
 \item[$L_i$:] $a,b,c \in A_i$,
 \item[$T1_i$:] $a\in A_i$ and $b,c\in A_{i+1}$.
\end{itemize}

It is easy to check that both systems are not connected. On the other hand, the second one can be made connected adding a single block taking one element from each $A_i$. This proves the claim for $n=9$. Further, removing any vertex not contained in the added block, one obtains a connected Tur\'an-system for $n=8$ with $21$ blocks. While there are Tur\'an-systems showing $\T(8,4,3)=20$ we do not know if there is any such connected system.

See Figure~\ref{fig:CC743} for proving our statement for $n=7$, Theorem~\ref{thm:CCn32} for $n=6$, and Remark~\ref{obs:small} for $n=4,5$.
\end{proof}

A famous conjecture of Tur\'an \cite{Tur-61} states that the bounds in Proposition~\ref{prop:Turan2} are best possible for $\C(n,n-4)$. By combining~\eqref{l1} and Proposition~\ref{prop:Turan2}, for $n\geq 10$ we have
\begin{equation}\label{contur}
\C(n,n-4)\leq\CC(n,n-4)\leq \begin{cases}
 m(m-1)(2m-1) & \text{if }n=3m, \\
 m^2(2m-1) & \text{if }n=3m+1,\\
 m^2(2m+1) & \text{if }n=3m+2.\\
\end{cases}
\end{equation}
Tur\'an's conjecture has been verified for all $n\le13$ by~\cite{Sid-82} and so, by~\eqref{contur}, the connected covering number can also be determined for these same values.

\smallskip

Towards proving Tur\'an's conjecture, it would be of interest to investigate the following.

\begin{question} Is it true that one of the inequalities in~\eqref{contur} is actually an equality ?
\end{question}

Bounds and precise values for all $\CC(n,r)$ with $n\leq 14$ are given in Table~\ref{current}. All the exact values previously given in~\cite{For-98} for the same range  have been improved by using our above results. 

\begin{center}
\begin{table}[h] \label{table}
\centering
\footnotesize
\noindent\makebox[\textwidth]{%
\begin{tabular}{|c|c|c|c|c|c|c|c|c|c|c|c|c|c|c|}

 \hline
$r\setminus n$   &$1$ &$2$    &$3$       &$4$ & $5$ &$6$ &$7$ &
$8$ &$9$ & $10$& $11$ &$12$ &$13$&$14$  \\

\hline
  $0$          & $1$ &$1$ &  $1$&$1$ & $1$& $1$& $1$& $1$& $1$&$1$ & $1$&$1$ &$1$ & $1$  \\
\hline
  $1$          &  &$1$ & $2$ &$3$ &$4$ &$5$&$6$ &$7$ &$8$ &$9$ &$10$ &$11$ &$12$ & $13$  \\
             
  \hline
     $2$          &  & &$1$ &$3$ & $5^{e,t}$& $7^{e}$&$10^e$ &$14^e$ & $18^e$& $22^e$&$27^e$ &$33^e$& $39^e$&$45^e$  \\
             
  \hline
     $3$         & &  & & $1$ &$4$&$7^{p,t}$ &$12^{p,u}$& $19^p$&$28^p$ &$40^p$ &$55^p$ & $73^p$& $[95^l,97^r]$& $[121^l,123^r]$   \\
             
  \hline
     $4$         & &  & &  & $1$& $5$&$10^t$ &$[20,21^u]$ &$[32^l,35^r]$ & $[53^l,59^r]$&$[83^l,89^r]$ &$[124^l,136^r]$ & $[179^l,193^r]$& $[250^l,271^r]$   \\
             
  \hline
     $5$          & & & &  & &$1$ &$6$ &$13^t$ &$31^u$ &$[51^l,61^r]$ &$[96^a,111^r]$ &$[159^l,177^r]$  & $[258^l,290^r]$&  $[401^l,447^r]$  \\
             
  \hline
     $6$         & &  & & &  &  & $1$& $7$&$17^t$ & $45^u$& $[84^a,95^r]$&$[165^a,195^r]$ & $[286^l,327^r]$& $[501^l,572^r]$   \\
             
  \hline
     $7$         &  & & & &  & & &$1$ &$8$&$21^t$ &$63^u$ &$[126^a,147^r]$ &$[269^a,323^r]$ &  $[491^l,587^r]$  \\
             
  \hline
     $8$         & &  & & &  & & & &$1$ & $9$& $26^t$&$84^u$  &$[185^a,210^r]$ &  $[419^a,505^r]$  \\
             
  \hline
     $9$          &&  & & &  & & & & &$1$ & $10$&$31^t$ &$112^u$ & $[259^s,297^r]$  \\
             
  \hline
     $10$         & & & & & & & & & & &$1$ &$11$&$37^t$ & $[143^s,144^u]$   \\
             
  \hline
     $11$         & &  & & & & & & & & & &$1$ &$12$& $43^t$  \\
             
  \hline
     $12$       &  &   & &  & &  &  & & && & &$1$  & $13$  \\
             
  \hline
   $13$         & &  &  & & & & & & & & &  & & $1$ \\
             
     \hline          

\end{tabular}}
\vspace{.2cm}
\caption{\label{current}Bounds and values of $\CC(n,r)$ for $n\le 14$.}
\normalsize \underline{ Key of Table~\ref{current} : }\\ \ \\
\begin{itemize}
 \item[$r$]--- Upper bound for $\CC(n,r)$(from {Equation}~\eqref{recursive})
\item[$e$]--- Exact values for $\CC(n,2)$ (Theorem~\ref{thm:CCn32}) 
\item[$t$]--- Exact values for $\CC(n,n-3)$ (Theorem~\ref{thm:Turan1})
 \item[$l$]--- Lower bound $\CC^*_1(n,r)$
\item[$p$]--- Some exact values for $\CC(n,3)$ (Theorem~\ref{prop:Cn43}) 
\item[$u$]--- Upper bound for $\CC(n,n-4)$ (Proposition~\ref{prop:Turan2})
  \item[$s$]--- Lower bound for $\C(n,r)$ (from {Equation}~\eqref{thm:schoenheim1})
   \item[$a$]--- Lower bounds for $\C(n,r)$ (from \cite{App-03})
\end{itemize}
\end{table}
\end{center}

Table~\ref{current} led us to consider the following.

\begin{question}
Is the sequence $\left(\CC(n,i)\right)_{0\leq i\leq n-1}$ {\em unimodal} for every $n$ ? or perhaps  {\em logarithmically concave}\footnote{A finite sequence of real numbers $\{a_1,a_2,\dots ,a_n\}$ is said to be {\em unimodal} (resp. {\em logarithmically concave} or {\em log-concave}) if there exists a $t$ such that $s_1\le s_2\le\cdots \le s_t$ and $s_t\ge s_{t+1}\ge\cdots \ge s_n$ (resp. if $a_i^2\ge a_{i-1}a_{i+1}$ holds for every $a_i$ with $1\le i \le n-1$). Notice that a log-concave sequence is unimodal.} ? 
\end{question}

\section{A general upper bound}\label{sec:gen}

Let $n$ and $r$ be positive integers such that $n\ge r+1 \ge 3$. Forge and Ram\'irez Alfons\'in  \cite{For-98} obtained the following general upper bound
\begin{equation}\label{defs}
\Su(n,r) := \sum_{i=1}^{\left\lfloor\frac{n-r+1}{2}\right\rfloor}\binom{n-2i}{r-1} + \left\lfloor\frac{n-r}{2}\right\rfloor \ge \CC(n,r).
\end{equation}
Let us notice that the upper bounds obtained by applying the recursive equation ~\eqref{recursive}, that were used in Table~\ref{current}, are better than the one given by~\eqref{defs}. Moreover, by iterating ~\eqref{recursive} it can be obtained 
\begin{equation}\label{defs1}
\CC(n,r) \le \sum_{i=r}^{n-1} \C(i,r-1).
\end{equation}
Although ~\eqref{defs1} might be used to get an {\em explicit} upper bound for $s(n,r)$, it is not clear how good it would be since that would depend on the known exact values and the upper bounds of $\C(n,r)$ used in the recurrence (and thus intrinsically difficult to compute). On the contrary, in \cite{For-98} Equation \eqref{defs} was used to give the best known (to our knowledge) explicit upper bound for $s(n,r)$.

\smallskip

In this section, we will construct a connected $(n,r+1,r)$-covering giving an upper bound for $\CC(n,r)$ better than  $\Su(n,r)$ and so, yielding a better upper bound for $s(n,r)$ than that given in \cite{For-98}.

\begin{theorem}\label{construction}
Let $n$ and $r$  be positive integers such that $n\ge r+1\ge 3$. 
Then
$
\CC(n,r) \le \N(n,r),
$
where
\begin{equation}\label{defn}
\N(n,r) := \sum_{i=0}^{\left\lceil\frac{n-r}{2}\right\rceil-1}(n-r-2i)\binom{r-2+2i}{r-2} + \left\lceil\frac{n-r}{2}\right\rceil-1 + \delta_0\C(n-2,r-2),
\end{equation}
and $\delta_0$ is the parity function of $n-r$, that is,
$
\delta_0 = \left\{\begin{array}{ll}
 0 & \text{ if } n-r \text{ is odd},\\
 1 & \text{otherwise}.\\
\end{array}\right.
$
\end{theorem}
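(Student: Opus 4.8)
The plan is to construct explicitly a connected $(n,r+1,r)$-covering of size $\N(n,r)$, proceeding by a careful layered construction that mimics but improves the ideas behind $\Su(n,r)$ and the recursion~\eqref{recursive}. The guiding principle is this: to cover all $r$-subsets of $[n]$ by $(r+1)$-blocks, peel off the largest elements in pairs. For a fixed $i$ with $0\le i\le \lceil\frac{n-r}{2}\rceil-1$, consider the pair of elements $\{n-2i,\,n-2i-1\}$ as the ``top'' and look at $r$-sets whose largest two elements are exactly these. More precisely, I would define a family of blocks that covers all $r$-subsets $R\subseteq[n]$ not yet covered by earlier layers, where the $i$-th layer handles $r$-sets meeting $\{n-2i-1,\dots,n\}$ in a controlled way; each such $r$-set $R$ should be extended to an $(r+1)$-block $R\cup\{x\}$ by adjoining one further element $x$ chosen from $\{1,\dots,n-2i-2\}$ in a canonical fashion, and one checks that the number of blocks needed in layer $i$ is exactly $(n-r-2i)\binom{r-2+2i}{r-2}$. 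The factor $\binom{r-2+2i}{r-2}$ counts the $(r-2)$-subsets of the ``lower'' ground set available at stage $i$ (so the two reserved top elements are fixed and $r-2$ of the remaining $r-2+2i$ are free), and the factor $n-r-2i$ accounts for the choices made when sliding down through that pair. If $n-r$ is even there is a leftover initial segment of size $r+2-$something (an $(r)$- or $(r+2)$-situation on the bottom), which is precisely where the term $\delta_0\,\C(n-2,r-2)$ enters: one installs an optimal ordinary covering on the last two elements' link, i.e. a $(n-2,r-1,r-2)$-covering lifted by the two reserved elements, contributing $\C(n-2,r-2)$ blocks.

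The second component is connectivity. After describing the blocks, I would exhibit, for each consecutive pair of layers $i$ and $i+1$, an explicit block in layer $i+1$ (or a short path of blocks) sharing an $r$-subset with some block of layer $i$ — this is where the additive term $\lceil\frac{n-r}{2}\rceil-1$ comes from: it is essentially the number of ``gluing'' blocks, one between each pair of consecutive peeling stages, exactly as the analogous term $\lfloor\frac{n-r}{2}\rfloor$ appears in~\eqref{defs}. Concretely, two blocks are adjacent in $G(\mathcal B)$ iff they share $r$ common elements, so I need each layer to be internally connected (blocks sharing the fixed top pair and $r-2$ of their lower elements are easily linked by swapping one lower element at a time, using that consecutive $(r-2)$-sets differ in one element and the corresponding blocks share $r$ elements) and then I need one bridge block per layer boundary. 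The base of the construction (the innermost layer or the $\delta_0$ covering term) must be connected on its own, which for the $\delta_0$ case follows because we may replace $\C(n-2,r-2)$ by a connected covering if needed — but in fact one only needs the whole thing connected, so a single bridge to that block suffices and the count still works since $\CC\le 2\C-1$ is wasteful compared to just adding bridges.

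I expect the main obstacle to be the bookkeeping that simultaneously (a) guarantees every $r$-subset of $[n]$ is covered exactly once across the layers with no double counting inflating the size, and (b) guarantees connectivity with only $\lceil\frac{n-r}{2}\rceil-1$ extra bridge blocks rather than one per block. These two goals are in tension: a wasteful cover is easy to connect, a tight cover is hard to connect. The resolution is to choose the extension element $x$ for each $r$-set $R$ not arbitrarily but by a rule (say, always the smallest available element, or an element chosen so that the resulting block lies on a pre-planned spanning structure within its layer) so that the blocks of a single layer already form a connected subgraph ``for free,'' and then only the inter-layer connections cost extra. I would first nail down the layer decomposition and verify the covering property and the count $\sum_i (n-r-2i)\binom{r-2+2i}{r-2}+\delta_0\C(n-2,r-2)$, then separately verify internal connectivity of each layer, and finally produce the $\lceil\frac{n-r}{2}\rceil-1$ explicit bridges, checking at the end that $\N(n,r)\le\Su(n,r)$ so that the improvement claim (and hence the improved bound on $s(n,r)$) is genuine — the last inequality being a routine but non-obvious binomial estimate comparing $(n-r-2i)\binom{r-2+2i}{r-2}$ summed over $i$ against $\sum_i\binom{n-2i}{r-1}$.
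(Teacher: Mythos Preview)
Your proposal is essentially the paper's approach: the paper also builds layers $\mathcal{N}_i$ indexed by consecutive pairs, proves each layer is internally connected, and inserts one bridge block $C_i$ between consecutive layers, with the even case handled by appending a lifted $(n-2,r-1,r-2)$-covering. The one point where your sketch is imprecise is the block shape: in the paper's $\mathcal{N}_i$ the fixed pair sits at positions $r{-}1,r$ (not at the top), the $(r{+}1)$st coordinate ranges freely over the elements \emph{above} the pair, and the bottom $r{-}2$ coordinates range over $[r{+}2i{-}2]$ --- this is exactly what makes the count $(n-r-2i)\binom{r-2+2i}{r-2}$ come out and, importantly, lets every block of the final $\delta_0$-layer share an $r$-set with some earlier $\mathcal{N}_i$ without spending any additional bridge (the paper's Claim~4), a point you were uncertain about.
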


\begin{proof} 
From this point on, for any positive integer $s$, we will denote $[s]:=\{1,\ldots,s\}$  and
by ${[s] \choose t}$ the set of all $t$-subsets of $[s]$. Moreover,
 for any subset of integers $\{b_1,\ldots,b_s\}$, we may suppose
that $b_i<b_j$ for all integers $i$ and $j$ such that $1\le i<j\le s$.

\begin{case}Suppose that $n-r$ is odd and let $m$ such that
$n-r=2m+1$. We will construct a connected $(r+2m+1,r+1,r)$-covering
of size
$$
m+ \sum_{i=0}^{m}{r-2+2i\choose r-2}(2m+1-2i).
$$
We consider a particular $(r+2m+1,r+1,r)$-covering, which is
constituted by a large number of blocks but whose associated graph
has a small number of connected components. For any
$i\in\{0,\ldots,m\}$, let $\mathcal{N}_i$ be the following subset of
$(r+1)$-subsets of $[r+2m+1]$:
$$
\mathcal{N}_i := \left\{ \{b_1,\ldots,b_{r+1}\}\ \middle|\
\begin{array}{l}
\{b_1,\ldots,b_{r-2}\}\in {[r+2i-2]\choose r-2} \\
b_{r-1}=r+2i-1,\ b_r=r+2i \\
b_{r+1}\in\{r+2i+1,\ldots,r+2m+1\}\end{array}\right\}.
$$

\begin{claim}\label{claim1} The set $\displaystyle\bigcup_{i=0}^{m}\mathcal{N}_i$
is a $(r+2m+1,r+1,r)$-covering.\end{claim}

Let $b=\{b_1,\ldots,b_r\}\in {[r+2m+1]\choose r}$.  If
$b_{r-1}=r-1+2i$ for some $i\in\{0,\ldots,m\}$, then $b\subset B$ for some
$B\in \mathcal{N}_i$. The same occurs if $b_{r-1}=r+2i$.

\begin{claim}\label{claim2}  The graph $G\left(\mathcal{N}_i\right)$ is connected,
for any $i\in\{0,\ldots,m\}$.\end{claim}

Let $B=\{b_1,\ldots,b_{r+1}\}$ and $C=\{c_1,\ldots,c_{r+1}\}$ in
$\mathcal{N}_i$.  Clearly, $B$ is adjacent to
$\{b_1,\ldots,b_{r},c_{r+1}\}$ in $G(\mathcal{N}_i)$. Since
$b_{r-1}=c_{r-1}$, $b_r=c_r$ and
$\{d_1,\ldots,d_{r-2},c_{r-1},c_r,c_{r+1}\}\in \mathcal{N}_i$ for
all $\{d_1,\ldots,d_{r-2}\}\subset[r-2+2i]$, then there exists a
path from $B$ to $C$.

\begin{claim}\label{claim3}  There exists a $(r+1)$-subset $C_i$ such that
$G(\mathcal{N}_i\cup C_i\cup \mathcal{N}_{i+1})$ is connected for
any $i\in\{0,\ldots,m-1\}$.\end{claim}

Let $B_i=\{1,\ldots,r-2,r-1+2i,r+2i,r+1+2i\}\in \mathcal{N}_i$ and
$B_{i+1}=\{1,\ldots,r-2,r+1+2i,r+2+2i,r+3+2i\}\in
\mathcal{N}_{i+1}$. Then, the $(r+1)$-subset
$C_i=\{1,\ldots,r-2,r+2i,r+1+2i,r+2+2i\}$ is adjacent to $B_i$ and
$B_{i+1}$ in $G(\mathcal{N}_i\cup C_i\cup \mathcal{N}_{i+1})$. This
concludes the proof of Claim 3.

By Claims \ref{claim1}, \ref{claim2} and \ref{claim3}, we obtain that $
\left(\bigcup_{i=0}^{m}\mathcal{N}_i\right)\bigcup\left(\bigcup_{i=0}^{m-1}C_i\right)
$ is a connected $(r+2m+1,r+1,r)$-covering. Finally, since
$|\mathcal{N}_i|={r-2+2i\choose r-2}(2m+1-2i)$ for any
$i\in\{0,\ldots,m\}$, the theorem holds in this case. \end{case}

\begin{case}
Suppose  $n-r$ is even and let $m$ be such that
$n-r=2m$. We are going to construct a $(r+2m,r+1,r)$-connected
covering of size
$$m-1+\C(r-2+2m,r-2)+\sum_{i=0}^{m-1}{r-2+2i\choose r-2}(2m-2i).$$

As already defined in Case 1, we consider the collection
$\mathcal{N}_i$ of $(r+1)$-subsets of $[r+2m]$ defined by $
\mathcal{N}_i := {[r+2i-2]\choose
r-2}\times\{r+2i-1\}\times\{r+2i\}\times\{r+2i+1,\ldots,r+2m\}$ for
any $i\in\{0,\ldots,m-1\}$.
Let $\mathcal{C}$ be a $(r+2m-2,r-1,r-2)$-covering of size
$\C(r+2m-2,r-2)$ and consider the set $ \mathcal{N}_m := \left\{
B\cup\{r+2m-1,r+2m\}\ \middle|\ B\in\mathcal{C}\right\}$. Then, one
can check that $\bigcup\limits_{i=0}^{m}\mathcal{N}_i$ is a
$(r+2m,r+1,r)$-covering. Similarly as in  the proofs of Claims \ref{claim2} and
\ref{claim3}, it follows that $G(\mathcal{N}_i)$ is connected for any
$i\in\{0,\ldots,m-1\}$ and there exists a $(r+1)$-subset $C_i$ such that
$G(\mathcal{N}_i\cup C_i\cup \mathcal{N}_{i+1})$ is connected for
any $i\in\{0,\ldots,m-2\}$.

\begin{claim}\label{claim4}. For any $B\in \mathcal{N}_m$, there exist
$i\in\{0,\ldots,m-1\}$ and $C\in \mathcal{N}_{i}$ such that $B$ is
adjacent to $C$ in the graph $G(\mathcal{N}_i\cup
\mathcal{N}_{m})$.\end{claim}

Let $B=\{b_1,\ldots,b_{r-1},r+2m-1,r+2m\}\in \mathcal{N}_m$. If
$b_{r-1}=r+2i-1$ for some $i\in\{0,\ldots,m-1\}$, then
$\{b_1,\ldots,b_{r-2}\}\in {[r+2i-2]\choose r-2}$. Let $ C
=\{b_1,\ldots,b_{r-2},r+2i-1,r+2i,r+2m\}$, by definition
$C\in\mathcal{N}_i$ and moreover, since
$\{b_1,\ldots,b_{r-2},r+2i-1,r+2m\}\subset B$ and
$\{b_1,\ldots,b_{r-2},r+2i-1,r+2m\}\subset C$, we deduce that $B$
and $C$ are adjacent in the graph $G(\mathcal{N}_i\cup
\mathcal{N}_{m})$. Either, if $b_{r-1}=r+2i$ for some
$i\in\{0,\ldots,m-1\}$, we have that $\{b_1,\ldots,b_{r-2}\}\in
{[r+2i-1]\choose r-2}$. We distinguish two cases on the value of
$b_{r-2}$. First, if $b_{r-2}<r+2i-1$, then
$\{b_1,\ldots,b_{r-2}\}\in {[r+2i-2]\choose r-2}$. Consider now
$C=\{b_1,\ldots,b_{r-2},r+2i-1,r+2i,r+2m\}$. As above, since
$\{b_1,\ldots,b_{r-2},r+2i,r+2m\}\subset B$ and
$\{b_1,\ldots,b_{r-2},r+2i,r+2m\}\subset C$, we deduce that $B$ and
$C$ are adjacent in the graph $G(\mathcal{N}_i\cup
\mathcal{N}_{m})$. Finally, suppose that $b_{r-2}=r+2i-1$ and let
$\alpha\in[r+2i-2]\setminus\{b_1,\ldots,b_{r-3}\}$ and  $
C=\{b_1,\ldots,b_{r-3},r+2i-1,r+2i,r+2m\}\cup\{\alpha\} \in {[r+2m]
\choose r+1}$. Since $\{b_1,\ldots,b_{r-3},r+2i-1,r+2i,r+2m\}\subset
B$ and $\{b_1,\ldots,b_{r-3},r+2i-1,r+2i,r+2m\}\subset C$, we deduce
that $B$ and $C$ are adjacent in the graph $G(\mathcal{N}_i\cup
\mathcal{N}_{m})$. This concludes the proof of Claim \ref{claim4}.
\end{case}

Hence,
$(\bigcup_{i=0}^{m}\mathcal{N}_i)\cup(\bigcup_{i=0}^{m-2}C_i)$ is
a connected $(r+2m,r+1,r)$-covering. Since
$|\mathcal{N}_i|={r-2+2i\choose r-2}(2m-2i)$ for any
$i\in\{0,\ldots,m-1\}$ and $|\mathcal{N}_m|=\C(n-2,r-2)$, the theorem
holds.
\end{proof}

Let us illustrate the construction given in the above theorem.
\begin{example} $\N(7,4)=10$. 
We consider  
$$\mathcal{N}_0=\{12345,12346,12347\} \text{ and } \mathcal{N}_1=\{12567,13567,14567,23567,24567,34567\}.$$ 
It can be checked that $\mathcal{N}_0\cup \mathcal{N}_1$ is  a $(7,5,4)$-covering and $G(\mathcal{N}_0)$ and $G(\mathcal{N}_1)$ are connected. Now, by taking $C_0=12456$, it follows that  $G(\mathcal{N}_0\cup C_0\cup \mathcal{N}_{1})$ is  connected.
\end{example}

We may now show that $\Su(n,r)> \N(n,r)$. For this we need first the following Theorem and Proposition.

\begin{theorem}\label{thm1}
Let $r$ and $n$ be positive integers such that $n\ge r+1 \ge 3$. Then,
$$
\Su(n,r) = \N(n,r) + \sum_{i=0}^{\left\lfloor\frac{n-r}{2}\right\rfloor-1}\left(\left\lfloor\frac{n-r}{2}\right\rfloor-i\right)\binom{r-2+2i}{r-3} + \delta_0\left(1-\C(n-2,r-2)\right),
$$
where $\delta_0$ is the parity function of $n-r$.
\end{theorem}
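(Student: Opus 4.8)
The plan is to prove the identity by a direct computation: after removing the common terms it reduces to a single binomial identity, which I would verify by repeated application of Pascal's rule $\binom{a}{b}=\binom{a-1}{b}+\binom{a-1}{b-1}$ together with an interchange of two summations. Write $m=\left\lfloor\frac{n-r}{2}\right\rfloor$ throughout.

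First I would substitute the definitions~\eqref{defs} of $\Su(n,r)$ and~\eqref{defn} of $\N(n,r)$ into $\Su(n,r)-\N(n,r)$. A short computation, using the elementary fact that $\left\lfloor\frac{n-r}{2}\right\rfloor-\left\lceil\frac{n-r}{2}\right\rceil+1-\delta_0=0$, shows that the asserted equality is equivalent, in \emph{both} parities of $n-r$ (the $\C(n-2,r-2)$ terms cancelling when present), to the purely binomial statement
\[
\sum_{i=1}^{\left\lfloor\frac{n-r+1}{2}\right\rfloor}\binom{n-2i}{r-1}\;-\;\sum_{i=0}^{\left\lceil\frac{n-r}{2}\right\rceil-1}(n-r-2i)\binom{r-2+2i}{r-2}\;=\;\sum_{i=0}^{m-1}(m-i)\binom{r-2+2i}{r-3}.
\]
For $r=2$ every $\binom{\cdot}{r-3}$ vanishes and one checks $\Su(n,2)=\N(n,2)$ directly, so from now I assume $r\ge 3$. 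I would then reindex the first sum: the substitution $i\mapsto m-i$ when $n-r=2m$, and $i\mapsto m+1-i$ when $n-r=2m+1$, turns it into $\sum_{j=0}^{m-1}\binom{r+2j}{r-1}$, respectively $\sum_{j=0}^{m}\binom{r-1+2j}{r-1}$.

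The heart of the matter is the ``one-step'' identity obtained from Pascal's rule:
\[
\binom{r+2j}{r-1}=\binom{r-2+2j}{r-1}+2\binom{r-2+2j}{r-2}+\binom{r-2+2j}{r-3},\qquad \binom{r-1+2j}{r-1}=\binom{r-3+2j}{r-1}+\binom{r-2+2j}{r-2}+\binom{r-3+2j}{r-2}.
\]
In each case the first term on the right is the preceding member of the same sequence (index shift $j\mapsto j-1$), so iterating down to $j=0$, where the boundary binomial ($\binom{r-2}{r-1}$, resp.\ $\binom{r-3}{r-1}$) is zero, yields $\binom{r+2j}{r-1}=\sum_{k=0}^{j}\bigl(2\binom{r-2+2k}{r-2}+\binom{r-2+2k}{r-3}\bigr)$ in the even case and $\binom{r-1+2j}{r-1}=\sum_{k=0}^{j}\bigl(\binom{r-2+2k}{r-2}+\binom{r-3+2k}{r-2}\bigr)$ in the odd case.

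Finally I would substitute these expansions into $\sum_j\binom{\cdot}{r-1}$ and exchange the order of summation: the $k$-th remainder is counted once for every admissible $j\ge k$, with multiplicity $m-k$ in the even case and $m+1-k$ in the odd case. In the even case the coefficient of $\binom{r-2+2k}{r-2}$ becomes $2(m-k)-(2m-2k)=0$ once the second sum from $\N(n,r)$ is subtracted, leaving precisely $\sum_{k=0}^{m-1}(m-k)\binom{r-2+2k}{r-3}$. In the odd case, after the reindexing $k=l+1$ and one more use of Pascal in the form $\binom{r-1+2l}{r-2}=\binom{r-2+2l}{r-2}+\binom{r-2+2l}{r-3}$, the $\binom{\cdot}{r-2}$-parts cancel against the contribution of $\N(n,r)$ and what remains is again $\sum_{l=0}^{m-1}(m-l)\binom{r-2+2l}{r-3}$. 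The only obstacle is bookkeeping — keeping the index ranges, the Pascal steps, and the signs after the interchange of summations straight — there is no conceptual difficulty.
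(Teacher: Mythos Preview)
Your argument is correct. After the cancellation of the constant and $\C(n-2,r-2)$ terms, the reduction to the binomial identity you state is valid in both parities, your reindexing of the $\Su$-sum is right, the two-step Pascal telescopes are correct (with the boundary terms $\binom{r-2}{r-1}$ and $\binom{r-3}{r-1}$ indeed vanishing for $r\ge 3$), and the interchange-of-summation bookkeeping checks out in both cases. The separate verification for $r=2$, where $\binom{\cdot}{-1}=0$ and $\C(n-2,0)=1$, is also fine.

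Your approach is genuinely different from the paper's. The paper proceeds by induction on $n$ in steps of two: it verifies the identity for $n=r+1$ and $n=r+2$, then computes the increment $D=(\Su(n+2,r)-\N(n+2,r))-(\Su(n,r)-\N(n,r))$ and shows, via the hockey-stick identity $\sum_{i=r-2}^{M}\binom{i}{r-2}=\binom{M+1}{r-1}$, that $D=\sum_{i=0}^{\lfloor(n-r)/2\rfloor}\binom{r-2+2i}{r-3}+\delta_0(\C(n-2,r-2)-\C(n,r-2))$, which is exactly the increment of the right-hand side. Your route is a single direct computation: you unroll each $\binom{n-2i}{r-1}$ via iterated Pascal and then swap sums. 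The paper's induction keeps the individual steps shorter and handles both parities largely in parallel, at the cost of carrying an induction hypothesis; your method avoids induction entirely but requires slightly more case-splitting and careful index tracking in the odd case. Both are purely elementary and of comparable length.
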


\begin{proof}
By induction on $n>r$. From~\eqref{defs} and~\eqref{defn},  the identity is verified for $n=r+1$ and $n=r+2$. Suppose now that the identity is verified for a certain value of $n$ and let $D$ be the difference
$$
D := \left(\Su(n+2,r)-\N(n+2,r)\right)-\left(\Su(n,r)-\N(n,r)\right).
$$
Then 
$$
\Su(n+2,r) = \N(n+2,r) + \left(\Su(n,r)-\N(n,r)\right) + D.
$$
By using~\eqref{defs}, we obtain
$$
\Su(n+2,r) - \Su(n,r)
\begin{array}[t]{l}
 = \displaystyle\sum_{i=1}^{\left\lfloor\frac{n-r+1}{2}\right\rfloor+1}\binom{n+2-2i}{r-1} + \left\lfloor\frac{n-r}{2}\right\rfloor +1 - \sum_{i=1}^{\left\lfloor\frac{n-r+1}{2}\right\rfloor}\binom{n-2i}{r-1} - \left\lfloor\frac{n-r}{2}\right\rfloor \\ \ \\
 = \displaystyle\binom{n}{r-1} + 1.
\end{array}
$$
By using~\eqref{defn}, we have
$$
\N(n+2,r) - \N(n,r)
\begin{array}[t]{l}
 = \begin{array}[t]{l}
 \displaystyle\sum_{i=0}^{\left\lceil\frac{n-r}{2}\right\rceil}(n+2-r-2i)\binom{r-2+2i}{r-2} + \left\lceil\frac{n-r}{2}\right\rceil+ \delta_0\C(n,r-2) \\ \ \\
 -\displaystyle\sum_{i=0}^{\left\lceil\frac{n-r}{2}\right\rceil-1}(n-r-2i)\binom{r-2+2i}{r-2} - \left\lceil\frac{n-r}{2}\right\rceil +1 - \delta_0\C(n-2,r-2) \\ \end{array} \\ \ \\ 
 = \begin{array}[t]{l}
 \displaystyle\sum_{i=0}^{\left\lceil\frac{n-r}{2}\right\rceil-1}2\binom{r-2+2i}{r-2} + \left(n+2-r-2\left\lceil\frac{n-r}{2}\right\rceil\right)\binom{r-2+2\left\lceil\frac{n-r}{2}\right\rceil}{r-2} \\ \ \\
 +\delta_0\left(\C(n,r-2)-\C(n-2,r-2)\right) +1. \\
 \end{array}
\end{array}
$$
Moreover, 
for $n-r$ odd, it follows that
$$
\N(n+2,r) - \N(n,r) = \begin{array}[t]{l} \displaystyle\sum_{i=0}^{\left\lceil\frac{n-r}{2}\right\rceil}2\binom{r-2+2i}{r-2} + (\delta_0-1)\binom{n-1}{r-2} \\ \ \\
 + \delta_0\left(\C(n,r-2)-\C(n-2,r-2)\right) +1. \end{array}
$$
Therefore
$$
D = \binom{n}{r-1} -  \sum_{i=0}^{\left\lceil\frac{n-r}{2}\right\rceil}2\binom{r-2+2i}{r-2} + (1-\delta_0)\binom{n-1}{r-2} + \delta_0\left(\C(n-2,r-2)-\C(n,r-2)\right).
$$
From the identity
$
\binom{r-2+2i}{r-2} = \binom{r-1+2i}{r-2} - \binom{r-2+2i}{r-3},
$
we obtain 
that
$$
\sum_{i=0}^{\left\lceil\frac{n-r}{2}\right\rceil}2\binom{r-2+2i}{r-2}
\begin{array}[t]{l}
 = \displaystyle\sum_{i=0}^{\left\lceil\frac{n-r}{2}\right\rceil}\binom{r-2+2i}{r-2} + \sum_{i=0}^{\left\lceil\frac{n-r}{2}\right\rceil}\binom{r-1+2i}{r-2} - \sum_{i=0}^{\left\lceil\frac{n-r}{2}\right\rceil}\binom{r-2+2i}{r-3} \\ \ \\
 = \displaystyle\sum_{i=r-2}^{r-1+2\left\lceil\frac{n-r}{2}\right\rceil}\binom{i}{r-2} - \sum_{i=0}^{\left\lceil\frac{n-r}{2}\right\rceil}\binom{r-2+2i}{r-3} \\ \ \\
 = \displaystyle\binom{r+2\left\lceil\frac{n-r}{2}\right\rceil}{r-1} - \sum_{i=0}^{\left\lceil\frac{n-r}{2}\right\rceil}\binom{r-2+2i}{r-3}.
\end{array}
$$
Thus,
$$
D =
\begin{array}[t]{l}
\displaystyle\binom{n}{r-1} - \binom{r+2\left\lceil\frac{n-r}{2}\right\rceil}{r-1} + (1-\delta_0)\binom{n-1}{r-2} + \sum_{i=0}^{\left\lceil\frac{n-r}{2}\right\rceil}\binom{r-2+2i}{r-3} \\ \ \\
 + \delta_0\left(\C(n-2,r-2)-\C(n,r-2)\right).
\end{array}
$$
If $n-r$ is even, then $\delta_0=1$ and
$$
\binom{n}{r-1} - \binom{r+2\left\lceil\frac{n-r}{2}\right\rceil}{r-1} + (1-\delta_0)\binom{n-1}{r-2} = \binom{n}{r-1} - \binom{n}{r-1} = 0.
$$
Either, if $n-r$ is odd, then $\delta_0=0$ and
$$
\binom{n}{r-1} - \binom{r+2\left\lceil\frac{n-r}{2}\right\rceil}{r-1} + (1-\delta_0)\binom{n-1}{r-2}
\begin{array}[t]{l}
 = \displaystyle\binom{n}{r-1} - \binom{n+1}{r-1} + \binom{n-1}{r-2} \\ \ \\
 = -\displaystyle\binom{n}{r-2}+\binom{n-1}{r-2} \\ \ \\
 = -\displaystyle\binom{n-1}{r-3}.
\end{array}
$$
It follows that
$$
 D
\begin{array}[t]{l}
 = \displaystyle\sum_{i=0}^{\left\lceil\frac{n-r}{2}\right\rceil}\binom{r-2+2i}{r-3} + (\delta_0-1)\binom{n-1}{r-3} + \delta_0\left(\C(n-2,r-2)-\C(n,r-2)\right) \\ \ \\
 = \displaystyle\sum_{i=0}^{\left\lfloor\frac{n-r}{2}\right\rfloor}\binom{r-2+2i}{r-3} + \delta_0\left(\C(n-2,r-2)-\C(n,r-2)\right).
\end{array}
$$
Now, with the induction hypothesis, we obtain
$$
\Su(n+2,r) - \N(n+2,r)
\begin{array}[t]{l}
 = \displaystyle\left(\Su(n,r)-\N(n,r)\right) + D \\ \ \\
 = \begin{array}[t]{l} \displaystyle\sum_{i=0}^{\left\lfloor\frac{n-r}{2}\right\rfloor-1}\left(\left\lfloor\frac{n-r}{2}\right\rfloor-i\right)\binom{r-2+2i}{r-3} + \delta_0\left(1-\C(n-2,r-2)\right) \\ \ \\
 + \displaystyle\sum_{i=0}^{\left\lfloor\frac{n-r}{2}\right\rfloor}\binom{r-2+2i}{r-3} + \delta_0\left(\C(n-2,r-2)-\C(n,r-2)\right) \end{array} \\ \ \\
 = \displaystyle\sum_{i=0}^{\left\lfloor\frac{n-r}{2}\right\rfloor}\left(\left\lfloor\frac{n-r}{2}\right\rfloor+1-i\right)\binom{r-2+2i}{r-3} + \delta_0\left(1-\C(n,r-2)\right).
\end{array}
$$
\end{proof}

\begin{theorem}\label{th:gen}
Let $r$ and $n$ be positive integers such that $n-r$ is an even number. Then,
$$
\Su(n,r) \ge \N(n,r) + \sum_{i=0}^{\frac{n-r}{2}-2}\left(\frac{n-r}{2}-i-1\right)\binom{r-2+2i}{r-3}.
$$
\end{theorem}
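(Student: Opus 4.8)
The plan is to reduce Theorem~\ref{th:gen} to a single covering inequality by means of the exact formula in Theorem~\ref{thm1}, and then to prove that inequality by an explicit construction. Write $M=\frac{n-r}{2}$; since $n-r$ is even we have $\delta_0=1$ and $\left\lfloor\frac{n-r}{2}\right\rfloor=M$, so Theorem~\ref{thm1} becomes
$$\Su(n,r)-\N(n,r)=\sum_{i=0}^{M-1}(M-i)\binom{r-2+2i}{r-3}+1-\C(n-2,r-2).$$
Subtracting $\sum_{i=0}^{M-2}(M-1-i)\binom{r-2+2i}{r-3}$, the coefficient of $\binom{r-2+2i}{r-3}$ decreases by exactly $1$ for each $0\le i\le M-2$, while the $i=M-1$ term (with coefficient $M-(M-1)=1$) survives; hence the asserted inequality is equivalent to
$$\C(n-2,r-2)\ \le\ 1+\sum_{i=0}^{M-1}\binom{r-2+2i}{r-3}.$$
So everything comes down to this last estimate. (When $M=1$ it just asserts $\C(r,r-2)\le r-1$, which is immediate since $\C(r,r-2)=\T(r,2,1)=r-1$.)

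For the general estimate I would exhibit an explicit $(n-2,r-1,r-2)$-covering. Put $N=n-2$ and $R=r-2$, so $N-R=2M$. For $i=0,1,\dots,M-1$ take the family $\mathcal B_i$ consisting of all $(R+1)$-subsets of the form $S\cup\{N-2i-1,N-2i\}$ with $S\in\binom{[N-2i-2]}{R-1}$; this has $\binom{N-2i-2}{R-1}$ blocks, and $N-2i-2\ge R-1$ in the relevant range. Because $S\subseteq[N-2i-2]$, every block of $\mathcal B_i$ has $N-2i-1$ and $N-2i$ as its two largest elements. A short check then shows that a block $S\cup\{N-2i-1,N-2i\}$ contains an $R$-set $T$ iff $T$ meets $\{N-2i-1,N-2i\}$ and all remaining elements of $T$ are $\le N-2i-2$, i.e.\ iff $\max T\in\{N-2i-1,N-2i\}$. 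Hence $\mathcal B_i$ covers precisely the $R$-subsets whose maximum lies in $\{N-2i-1,N-2i\}$; as $i$ runs over $0,\dots,M-1$ these pairs exhaust $\{R+1,\dots,N\}$, so $\bigcup_i\mathcal B_i$ covers every $R$-subset with maximum at least $R+1$. The only $R$-subset with maximum equal to $R$ is $\{1,\dots,R\}$, which we cover by adjoining the single further block $\{1,\dots,R+1\}$. Reindexing via $j=M-1-i$ gives $\bigl|\bigcup_i\mathcal B_i\bigr|+1=1+\sum_{j=0}^{M-1}\binom{R+2j}{R-1}$, exactly the bound required.

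I do not foresee a real obstacle: the first step is forced once Theorem~\ref{thm1} is available, and the construction is essentially the one already used for Theorem~\ref{construction}, with the connector blocks $C_i$ deleted — which is precisely what produces the saving of $M-1$ blocks (removing the $M$ connectors costs only the one extra block $\{1,\dots,R+1\}$). The single point demanding care is the description of what each $\mathcal B_i$ covers: one must verify that $S\cup\{N-2i-1,N-2i\}$ with $S\subseteq[N-2i-2]$ genuinely has $N-2i-1,N-2i$ as its two largest entries, so that the families partition the $R$-subsets by their maxima with no gaps and with overlap only on the set $\{1,\dots,R\}$. Once this is pinned down, the size count and the reduction above finish the proof.
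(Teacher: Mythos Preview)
Your proposal is correct and follows essentially the same route as the paper. Both arguments first use Theorem~\ref{thm1} (with $\delta_0=1$) to reduce the statement to the single inequality $\C(n-2,r-2)\le 1+\sum_{i=0}^{M-1}\binom{r-2+2i}{r-3}$; the paper then obtains this by iterating the known bound $\C(n,r)\le\binom{n-2}{r-1}+\C(n-2,r)$ from~\cite{Gor-95}, while you spell out the very construction underlying that bound (cover $R$-sets by their maximum pair $\{N-2i-1,N-2i\}$ and add one block for $\{1,\dots,R\}$). These are the same argument presented at different levels of abstraction.
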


\begin{proof}
It is known \cite[page 7]{Gor-95} that $
\C(n,r) \le \binom{n-2}{r-1} + \C(n-2,r).
$
By applying this inequality  repeatedly we have
$$
\C(n-2,r-2) \le \sum_{i=0}^{\frac{n-r}{2}-1}\binom{r-2+2i}{r-3} + 1.
$$
Then, we deduce from Theorem~\ref{thm1} that
$$
\Su(n,r)
\begin{array}[t]{l}
 = \N(n,r) + \displaystyle\sum_{i=0}^{\frac{n-r}{2}-1}\left(\frac{n-r}{2}-i\right)\binom{r-2+2i}{r-3} + 1-\C(n-2,r-2) \\ \ \\
 \ge \N(n,r) + \displaystyle\sum_{i=0}^{\frac{n-r}{2}-1}\left(\frac{n-r}{2}-i\right)\binom{r-2+2i}{r-3} - \sum_{i=0}^{\frac{n-r}{2}-1}\binom{r-2+2i}{r-3} \\ \ \\
 = \N(n,r) + \displaystyle\sum_{i=0}^{\frac{n-r}{2}-2}\left(\frac{n-r}{2}-i-1\right)\binom{r-2+2i}{r-3}.
\end{array}
$$
\end{proof}

 \section{Asymptotics}\label{sec:asym}
 In~\cite{Rod-85} R\"odl uses the probabilistic method to show the existence of \emph{asymptotically good coverings}. Restricted to our case this means that
 $$\frac{\C(n,r)}{\binom{n}{r}} \to \frac{1}{r+1}\text{ as }n\to \infty.$$
 
 Since $\CC(n,r)\leq 2\C(n,r)$ (see~\cite{For-98}) we immediately obtain:
 $$\frac{\CC(n,r)}{\binom{n}{r}} \to a\leq \frac{2}{r+1}\text{ as }n\to \infty.$$

In~\cite{For-98} it was shown that 
 $$\frac{\Su(n,r)}{\binom{n}{r}} \to \frac{1}{2}\text{ as }n\to \infty$$
 and since by Theorem~\ref{th:gen} the difference $\N(n,r)-\Su(n,r)$ is in $\mathcal{O}(n^{r-1})$ we have the same asymptotic behavior for $\N(n,r)$.
 
It is however still a topic of research to find explicit constructions witnessing the bound of R\"odl, see~\cite{Kuz-00}.
 
\section*{Acknowledgments}
Much of this work in particular for the construction of the connected covering designs in Figures~\ref{fig:CC743},~\ref{fig:CC943},~\ref{fig:CC1143} strongly benefited from the La Jolla Covering Repository (\url{http://www.ccrwest.org/cover.html}) maintained by Dan Gordon.

\end{document}